\documentclass{amsart}


\setlength{\voffset}{-1,5cm}
\setlength{\hoffset}{-.5cm}
\setlength{\oddsidemargin}{1cm}
\setlength{\evensidemargin}{1cm}
\setlength{\textwidth}{160mm}

\setlength{\parindent}{0pt}

\setlength{\textheight}{220mm}
\setlength{\footskip}{35pt}
\setlength{\headsep}{40pt}
\setlength{\headheight}{0pt}

\frenchspacing
\usepackage{amsmath}
\usepackage{amsthm}
\usepackage{amssymb}
\usepackage{dsfont}
\usepackage{graphicx}
\usepackage{caption}
\usepackage{subcaption}
\usepackage{color}
\usepackage[english]{babel}
\usepackage{layout}

\newtheorem{theorem}{Theorem}[section]

\theoremstyle{definition}
\newtheorem{definition}[theorem]{Definition}
\newtheorem{remark}{Remark}

\setcounter{tocdepth}{1}

\newcommand{\LM}[1]{\hbox{\vrule width.2pt \vbox to#1pt{\vfill \hrule
			width#1pt
			height.2pt}}}
\def\LL{\!
	{\mathchoice{\>\LM7\>}{\>\LM7\>}{\,\LM5\,}{\,\LM{3.35}\,}}}

\def\R{\mathbb{R}}
\def\Z{\mathcal{Z}}
\def\F{\mathcal{F}}


\begin{document}

\author{Matthias Ruf}
\address[Matthias Ruf]{Zentrum Mathematik - M7, Technische Universit\"at M\"unchen, Boltzmannstrasse 3, 85747 Garching, Germany}
\email{mruf@ma.tum.de}

\title{On the continuity of functionals defined on partitions}

\begin{abstract}
We characterize the continuity of prototypical functionals acting on finite Caccioppoli partitions. In the spirit of the classical Reshetnyak continuity theorem for measures that can be used to prove continuity of surface-type functionals defined on single sets of finite perimeter we show that in the multiphase case continuity is equivalent to convergence of the perimeter of the jump set. 
\end{abstract}


\maketitle

\section{Introduction}
In this short note we investigate the continuity of functionals defined on functions of bounded variation taking values in a finite set. More precisely, for an open set $\Omega\subset\R^d$ and $\Z=\{z_1,\dots,z_q\}\subset\R^N$ we consider functionals $F:BV(\Omega,\Z)\to \R$ of the form
\begin{equation}\label{functional}
F(u)=\int_{S_u\cap\Omega}g(x,u^+,u^-,\nu_u)\,\mathrm{d}\mathcal{H}^{d-1}.
\end{equation}
Here $S_u$ denotes the discontinuity set of $u$, $\nu_u=\nu_u(x)$ is the corresponding normal vector at $x\in S_u$ and $u^+,u^-$ are the traces of $u$ on both sides of the discontinuity set. As it is usual in this framework the functional is well-defined if we require the symmetry condition $g(x,a,b,\nu)=g(x,b,a,-\nu)$. Such functionals arise for example in the study of multiphase Cahn-Hilliard fluids \cite{Ba} or the discrete to continuum analysis for spin systems with finitely many ground states \cite{BrCi16}. A general treatment of these functionals from a variational point of view can be found in \cite{AmBrI,AmBrII}. In the recent paper \cite{BrCoGa} the authors proved a density result in the space $BV(\Omega,\mathcal{Z})$ and established continuity of functionals of type (\ref{functional}) along the particular approximating sequence. Here we investigate general continuity properties. We provide a precise characterization of the convergence such that all functionals of the form (\ref{functional}) with $g$ bounded and continuous are themselves continuous with respect to this convergence.  
\\
\hspace*{0.5cm}
As we aim for a rather weak kind of convergence it is convenient to require that the integrand $g$ is bounded and continuous. While continuity in the trace variables is redundant as $\mathcal{Z}$ is a finite set, continuity in $x$ and $\nu$ can surely be dropped if we aim for norm convergence in $BV(\Omega,\Z)$. On the other hand, given a sequence $u_n\in BV(\Omega,\Z)$ such that $u_n\to u$ in $L^1(\Omega)$ we cannot expect that the energy converges as well. In this paper we prove that functionals of the form (\ref{functional}) are continuous along sequences $u_n$ such that $u_n\to u$ in $L^1(\Omega)$ and in addition $\mathcal{H}^{d-1}(S_{u_n}\cap\Omega)\to\mathcal{H}^{d-1}(S_u\cap\Omega)$. This is of course also a necessary condition when we take $g\equiv 1$. To the best of our knowledge such a result is not contained in the mathematical literature.
\\
\hspace*{0.5cm}
This short note is organised as follows: In the first part we give a short introduction to functions of bounded variation. In the second part we prove our main claim. 

\section{Mathematical Preliminaries}\label{sec:fp}
In this section we recall basic facts about functions of bounded variation that can be found in \cite{AFP}.
\begin{definition}\label{bv}
A function $u\in L^1(\Omega)$ is a function of bounded variation, if there exists a finite vector-valued Radon measure $\mu$ on $\Omega$ such that for any $\varphi\in C^{\infty}_c(\Omega,\mathbb{R}^d)$ it holds
\begin{equation*}
\int_{\Omega}u\,{\rm div}\varphi\,\mathrm{d}x=-\int_{\Omega}\langle \varphi,\mu\rangle.
\end{equation*}
In this case we write $u\in BV(\Omega)$ and $Du=\mu$ is the distributional derivative of $u$. A function $u\in L^1(\Omega,\mathbb{R}^N)$ belongs to $BV(\Omega,\mathbb{R}^N)$ if every component belongs to $BV(\Omega)$.  In this case $Du$ denotes the matrix-valued Radon measure consisting of the distributional derivatives of each component.	
\end{definition} 
The spaces $BV_{{\rm loc}}(\Omega)$ and $BV_{{\rm loc}}(\Omega,\mathbb{R}^N)$ are defined as usually. The space $BV(\Omega,\R^N)$ becomes a Banach space when endowed with the norm $\|u\|_{BV(\Omega,\R^N)}=\|u\|_{L^1(\Omega,\R^N)}+|Du|$, where $|Du|$ denotes the total variation of $Du$. When $\Omega$ is a bounded Lipschitz domain, then $BV(\Omega,\R^N)$ is compactly embedded in $L^1(\Omega,\R^N)$. We say that a sequence $u_n$ converges weakly$^*$ in $BV(\Omega,\R^N)$ to $u$ if $u_n\to u$ in $L^1(\Omega,\R^N)$ and $Du_n\overset{*}{\rightharpoonup}Du$ in the sense of measures. We say that $u_n$ converges strictly to $u$ if $u_n\to u$ in $L^1(\Omega,\R^N)$ and $|Du_n|\to |Du|$. Note that strict convergence implies weak$^*$-convergence and that for $\Omega$ with Lipschitz boundary norm-bounded sequences in $BV(\Omega,\R^N)$ are compact with respect to weak$^*$-convergence, but not necessarily with respect to strict convergence.
\\
\hspace*{0.5cm}
We say that a Lebesgue-measurable set $E\subset \mathbb{R}^d$ has finite perimeter in $\Omega$ if its characteristic function $\mathds{1}_E$ belongs to $BV(\Omega)$. We say it has locally finite perimeter in $\Omega$ if $\mathds{1}_E\in BV_{{\rm loc}}(\Omega)$. Let $\Omega^{\prime}$ be the largest open set such that $E$ has locally finite perimeter in $\Omega^{\prime}$. The reduced boundary $\mathcal{F}E$ of $E$ is defined as
\begin{equation*}
\mathcal{F}E:=\left\{x\in \Omega^{\prime}\cap{\rm supp}|D\mathds{1}_E|:\;\nu_E(x)=\lim_{\rho\to 0}\frac{D\mathds{1}_E(B_{\rho}(x))}{|D\mathds{1}_E(B_{\rho}(x))}\text{ exists and }|\nu_E(x)|=1\right\}.
\end{equation*}
Then it holds that $|D\mathds{1}_E|=\mathcal{H}^{d-1}\LL{\mathcal{F}E}$ and $\nu_E$ can be interpreted as a measure theoretic inner normal vector (see also Theorem 3.59 in \cite{AFP}).
\\
\hspace*{0.5cm}
Now we state some fine properties of $BV$-functions. To this end, we need some definitions. A function $u\in L^1(\Omega,\mathbb{R}^N)$ is said to have an approximate limit at $x\in \Omega$ whenever there exists $z\in\mathbb{R}^N$ such that
\begin{equation*}
\lim_{\rho\to 0}\frac{1}{\rho^d}\int_{B_{\rho}(x)}|u(y)-z|\,\mathrm{d}y=0.
\end{equation*}
We let $S_u\subset \Omega$ be the set, where $u$ has no approximate limit. Now we introduce so called approximate jump points. Given $x\in \Omega$ and $\nu\in S^{d-1}$ we set
\begin{equation*}
\begin{cases}
B^+_{\rho}(x,\nu)=\{y\in B_{\rho}(x):\;\langle y-x,\nu\rangle>0\}
\\
B^-_{\rho}(x,\nu)=\{y\in B_{\rho}(x):\;\langle y-x,\nu\rangle<0\}
\end{cases}
\end{equation*}
We say that $x\in \Omega$ is an approximate jump discontinuity of $u$ if there exist $a\neq b\in\mathbb{R}^N$ and $\nu\in S^{d-1}$ such that
\begin{equation*}
\lim_{\rho\to 0}\frac{1}{\rho^d}\int_{B_{\rho}^+(x,\nu)}|u(y)-a|\,\mathrm{d}y=\lim_{\rho\to 0}\frac{1}{\rho^d}\int_{B^-_{\rho}(x,\nu)}|u(y)-b|\,\mathrm{d}y=0.
\end{equation*}
Note that the triplet $(a,b,\nu)$ is determined uniquely up to the change to $(b,a,-\nu)$. We denote it by $(u^+(x),u^-(x),\nu_u(x))$. We let $J_u$ be the set of approximate jump discontinuities of $u$. Then the triplet $(u^+,u^-,\nu_u)$ can be chosen as a Borel function on the Borel set $J_u$. If $u\in BV(\Omega,\mathbb{R}^N)$ it can be shown that $\mathcal{H}^{d-1}(S_u\backslash J_u)=0$. Denoting by $\nabla u$ the density of the absolutely continuous part of $Du$ with respect to the Lebesgue measure, we can decompose the measure $Du$ via
\begin{equation*}
Du(B)=\int_B\nabla u\,\mathrm{d}x+\int_{J_u\cap B}(u^+(x)-u^-(x))\otimes\nu_u(x)\,\mathrm{d}\mathcal{H}^{d-1}+D^cu(B),
\end{equation*}
where $D^cu$ is the so called Cantor part.
\\
\hspace*{0.5cm}
From now on we assume that $\Omega$ is a bounded open set. Given a finite set $\Z=\{z_1,\dots,z_q\}\subset\mathbb{R}^N$ we define the space $BV(\Omega,\Z)$ as the space of those functions $u\in BV(\Omega,\mathbb{R}^N)$ such that $u(x)\in \Z$ almost everywhere. As an immediate consequence of the coarea formula applied to each component, it follows that all level sets $E_i:=\{u=z_i\}$ have finite perimeter in $\Omega$. Moreover, the total variation and the surface measure of $S_u$ are given by
\begin{equation*}
|Du|=\frac{1}{2}\sum_{i=1}^q\sum_{j\neq i}|z_i-z_j|\mathcal{H}^{d-1}(\F E_i\cap\F E_j\cap \Omega),\quad\quad
\mathcal{H}^{d-1}(S_u)=\frac{1}{2}\sum_{i=1}^q\mathcal{H}^{d-1}(\F E_i\cap \Omega).
\end{equation*}

\section{Statement and proof of the main result}
The following Theorem is the main result of this short note. For our proof we use minimal liftings in $BV$ as in \cite{RiSh} (see also \cite{JeJu})
\begin{theorem}\label{uppercont}
Let $u_n,u\in BV(\Omega,\Z)$ be such that $u_n\to u$ in $L^1(\Omega)$ and such that $\mathcal{H}^{d-1}(S_{u_n}\cap\Omega)\to\mathcal{H}^{d-1}(S_u\cap\Omega)$. Let $g:\Omega\times \Z^2\times S^{d-1}\to\R$ be bounded and continuous. Then
\begin{equation*}
\lim_n\int_{S_{u_n}\cap \Omega}g(x,u_n^+,u_n^-,\nu_{u_n})\,\mathrm{d}\mathcal{H}^{d-1}=\int_{S_u\cap \Omega}g(x,u^+,u^-,\nu_u)\,\mathrm{d}\mathcal{H}^{d-1}.
\end{equation*}	
\end{theorem}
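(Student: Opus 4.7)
The plan is to promote the hypotheses to (area-)strict convergence of $u_n\to u$ in $BV(\Omega,\mathbb{R}^N)$, and then to read off the claim from a Reshetnyak continuity statement applied to the minimal lifting of Rindler–Shaw.

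As a first step I aim to show that every pairwise interface mass
$\pi_{ij}(u_n):=\mathcal{H}^{d-1}(\mathcal{F}E_i^n\cap\mathcal{F}E_j^n\cap\Omega)$
converges to $\pi_{ij}(u)$. Since $u_n\to u$ in $L^1$, both $\mathds{1}_{E_i^n}\to\mathds{1}_{E_i}$ and $\mathds{1}_{E_i^n\cup E_j^n}\to\mathds{1}_{E_i\cup E_j}$ in $L^1$. Lower semicontinuity of the perimeter, combined with the scalar identity $\mathcal{H}^{d-1}(S_{u_n}\cap\Omega)=\tfrac{1}{2}\sum_{i}\mathrm{Per}(E_i^n)$ recalled in Section~\ref{sec:fp}, first forces $\mathrm{Per}(E_i^n)\to\mathrm{Per}(E_i)$ for every $i$. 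The inclusion–exclusion formula $\pi_{ij}=\tfrac{1}{2}\bigl(\mathrm{Per}(E_i)+\mathrm{Per}(E_j)-\mathrm{Per}(E_i\cup E_j)\bigr)$ and lsc of $\mathrm{Per}(E_i\cup E_j)$ then give $\limsup_n\pi_{ij}(u_n)\leq\pi_{ij}(u)$, and the sum constraint $\sum_{i<j}\pi_{ij}(u_n)=\mathcal{H}^{d-1}(S_{u_n}\cap\Omega)$ promotes this to pairwise equality. Plugging the limits into the explicit formula for $|Du|$ in Section~\ref{sec:fp} yields $|Du_n|(\Omega)\to|Du|(\Omega)$, which, since $\nabla u_n\equiv 0$, is area-strict convergence.

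With area-strict convergence I invoke the minimal lifting $\Lambda[u]\in\mathcal{M}(\Omega\times\mathbb{R}^N;\mathbb{R}^{d\times N})$ of \cite{RiSh}. For $u\in BV(\Omega,\mathcal{Z})$ this measure is purely singular and reads
$$\Lambda[u]=\sum_{i<j}\,\nu_{E_i}(x)\otimes\tfrac{z_i-z_j}{|z_i-z_j|}\,\mathrm{d}\mathcal{H}^{d-1}\LL(\mathcal{F}E_i\cap\mathcal{F}E_j)\otimes\mathcal{H}^1\LL[z_j,z_i],$$
with $|\Lambda[u]|(\Omega\times\mathbb{R}^N)=|Du|(\Omega)$. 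The Rindler–Shaw continuity result transports area-strict convergence of $u_n$ to strict convergence $\Lambda[u_n]\to\Lambda[u]$, so Reshetnyak's theorem on the product space delivers
$$\int\Psi\bigl(x,y,\tfrac{\mathrm{d}\Lambda[u_n]}{\mathrm{d}|\Lambda[u_n]|}\bigr)\,\mathrm{d}|\Lambda[u_n]|\longrightarrow\int\Psi\bigl(x,y,\tfrac{\mathrm{d}\Lambda[u]}{\mathrm{d}|\Lambda[u]|}\bigr)\,\mathrm{d}|\Lambda[u]|$$
for every bounded continuous $\Psi\colon\Omega\times\mathbb{R}^N\times S^{dN-1}\to\mathbb{R}$.

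The main obstacle is then to choose $\Psi$ so that its fibre integral in $y$ reproduces $g$, namely $\int_{[z_j,z_i]}\Psi\bigl(x,y,\nu_{E_i}\otimes\tfrac{z_i-z_j}{|z_i-z_j|}\bigr)\,\mathrm{d}\mathcal{H}^1(y)=g(x,z_i,z_j,\nu_{E_i})$ for every unordered pair, so that the displayed Reshetnyak identity collapses exactly onto the claim. A naïve segment-by-segment recipe is discontinuous at common endpoints $z_k\in\mathcal{Z}$ and, in degenerate configurations (e.g.\ collinear $z_i$), along overlapping segments in $\mathbb{R}^N$. I would remove the second difficulty by first replacing $z_i$ with $\tilde z_i=(z_i,\varepsilon e_i)\in\mathbb{R}^{N+q}$; this embedding leaves the partitions, their jump measures, and the target energy unchanged, while rendering the open segments $(\tilde z_j,\tilde z_i)$ pairwise disjoint. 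Using cut-offs $\phi_{ij}\in C_c((\tilde z_j,\tilde z_i))$ with $\int\phi_{ij}\,\mathrm{d}\mathcal{H}^1=1$, I define $\Psi=\phi_{ij}(y)\,g(x,z_i,z_j,\nu)$ on each open segment (the symmetry $g(x,a,b,\nu)=g(x,b,a,-\nu)$ removing the orientation ambiguity) and extend by zero via Tietze to a bounded continuous function on $\Omega\times\mathbb{R}^{N+q}\times S^{dN-1}$. With such $\Psi$ the Reshetnyak identity above is precisely the asserted convergence.
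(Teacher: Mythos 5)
Your argument is correct and rests on the same core machinery as the paper: the minimal lifting of \cite{RiSh,JeJu}, Reshetnyak continuity on $\Omega\times\mathbb{R}^{N}$ (resp.\ $\mathbb{R}^{N+q}$), and a re-embedding of $\Z$ that disentangles the segments $[z_j,z_i]$. Two differences are worth recording. First, your opening step --- deducing $\mathrm{Per}(E_i^n)\to\mathrm{Per}(E_i)$ from lower semicontinuity plus convergence of the sum, and then $\pi_{ij}(u_n)\to\pi_{ij}(u)$ via the inclusion--exclusion identity and the sum constraint --- is an elementary argument the paper does not need: by choosing $T(z_i)=e_i$ the paper makes \emph{all} pairwise distances equal to $\sqrt2$, so that $|DT(u_n)|=\sqrt2\,\mathcal H^{d-1}(S_{u_n}\cap\Omega)$ and the hypothesis is \emph{literally} strict convergence of $T(u_n)$, with no preliminary lemma. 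Your pairwise convergence is, however, essential to your route and you should say so explicitly: strict convergence of $u_n$ in $BV(\Omega,\mathbb{R}^N)$ alone does \emph{not} transfer to the embedded functions $\tilde u_n$ (the paper's Remark \ref{consequences} gives a strictly convergent sequence for which the conclusion fails), whereas $|D\tilde u_n|(\Omega)=\tfrac12\sum_{i\neq j}|\tilde z_i-\tilde z_j|\,\pi_{ij}(u_n)\to|D\tilde u|(\Omega)$ does follow from the pairwise limits; the strict convergence and the lifting must be set up for $\tilde u_n$ from the start, not retrofitted after the collinearity problem appears. Second, normalizing the cut-offs to $\int\phi_{ij}\,\mathrm d\mathcal H^1=1$ makes the fibre integral reproduce $g$ exactly, so you obtain continuity in one stroke, whereas the paper's plateau cut-off produces a one-sided $\mathcal O(\delta)$ error and forces the reduction to $g\geq0$ and to upper semicontinuity of $\pm F$; your version is slightly cleaner. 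One detail to fill in: as written, $\Psi=\phi_{ij}(y)\,g(x,z_i,z_j,\nu)$ presupposes that the normal is available, but the third argument supplied by Reshetnyak is the rank-one matrix $\xi=\nu\otimes\frac{\tilde z_i-\tilde z_j}{|\tilde z_i-\tilde z_j|}$; you must recover $\nu$ by contracting $\xi$ with the segment direction, normalize, and multiply by the modulus of the contraction so that $\Psi$ extends continuously (using boundedness of $g$) to the locus where the contraction vanishes --- this is exactly the role of the factor $|\xi^Te_1|$ in the paper's $f_\delta$.
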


\begin{proof}
To reduce notation, we define $F(u)=\int_{S_u\cap \Omega}g(x,u^+,u^-,\nu_u)\,\mathrm{d}\mathcal{H}^{d-1}$. We will just prove upper semicontinuity. The general result then follows applying upper semicontinuity to the functional $-F$. By our assumptions without loss of generality we can assume that $g\geq 0$. For an arbitrary $v\in BV(\Omega,\Z)$ we define for $|Dv|$-almost every $x\in \Omega$ the vector measure $\lambda_x$ via its action on functions $\varphi\in C_0(\mathbb{R}^N)$ by
\begin{equation*}
\int_{\R^N}\varphi(y)\,\mathrm{d}\lambda_x(y)=\frac{\mathrm{d}Dv}{\mathrm{d}|Dv|}(x)\int_0^1\varphi(\theta v^+(x)+(1-\theta)v^-(x))\,\mathrm{d}\theta.
\end{equation*}
To reduce notation, we write $v^{\theta}=\theta v^++(1-\theta)v^-$. Since $v^+,v^-$ are $|Dv|$-measurable, using Fubini's theorem one can show that for any $\varphi\in C_0(\Omega\times\R^N)$ the mapping 
\begin{equation*}
x\mapsto \int_{\R^N}\varphi(x,y)\,\mathrm{d}\lambda_x(y)
\end{equation*}
is $|Dv|$-measurable and essentially bounded. Hence we can define the generalized product $\mu[v]=|Dv|\otimes \lambda_x$ again by its action on $C_0(\Omega\times\mathbb{R}^N)$ setting
\begin{equation*}
\int_{\Omega\times\mathbb{R}^N}\varphi(x,y)\,\mathrm{d}\mu[v](x,y)=\int_{\Omega}\int_{\mathbb{R}^N}\varphi(x,y)\,\mathrm{d}\lambda_x(y)\,\mathrm{d}|Dv|(x);
\end{equation*}
see also Definition 2.27 in \cite{AFP}. We next claim that up to a negligible set it holds that 
\begin{equation}\label{polar}
\frac{\mathrm{d}\mu[v]}{\mathrm{d}|\mu[v]|}(x,y)=\frac{\mathrm{d}Dv}{\mathrm{d}|Dv|}(x).
\end{equation}
Indeed, Corollary 2.29 in \cite{AFP} yields $|\mu[v]|=|Dv|\otimes |\lambda_x|$. As the defining formula for the generalized product extends to integrable functions, we infer that
\begin{align*}
\int_{\Omega\times\mathbb{R}^N}&\varphi(x,y)\frac{\mathrm{d}Dv}{\mathrm{d}|Dv|}(x)\,\mathrm{d}|\mu[v]|(x,y)=\int_{\Omega}\int_{\mathbb{R}^N}\varphi(x,y)\frac{\mathrm{d}Dv}{\mathrm{d}|Dv|}(x)\,\mathrm{d}|\lambda_x|(y)\,\mathrm{d}|Dv|(x)
\\
&=\int_{\Omega}\int_{\mathbb{R}^N}\varphi(x,y)\,\mathrm{d}\lambda_x(y)\,\mathrm{d}|Dv|(x)=\int_{\Omega\times \mathbb{R}^N}\varphi(x,y)\,\mathrm{d}\mu[v](x,y),
\end{align*}
where we have used that $\lambda_x=\frac{\mathrm{d}Dv}{\mathrm{d}|Dv|}(x)|\lambda_x|$. Hence (\ref{polar}) follows by uniqueness of the polar decomposition of measures. Because of (\ref{polar}) and the generalized product structure of $|\mu[v]|$, by an approximation argument it holds that
\begin{align}\label{toreshetnyak}
\int_{\Omega\times\mathbb{R}^N}f(x,y,\frac{\mathrm{d}\mu[v]}{\mathrm{d}|\mu[v]|}(x,y))\,\mathrm{d}|\mu[v]|(x,y)
&=\int_{\Omega}\int_{\mathbb{R^N}}f(x,y,\frac{\mathrm{d}Dv}{\mathrm{d}|Dv|}(x))\,\mathrm{d}|\lambda_x|(y)\,\mathrm{d}|Dv|(x)\nonumber
\\
&=\int_{\Omega}\int_0^1f(x,v^{\theta},\frac{\mathrm{d}Dv}{\mathrm{d}|Dv|}(x))\,\mathrm{d}\theta\,\mathrm{d}|Dv|(x)
\end{align}
for every nonnegative function $f\in C(\Omega\times\mathbb{R}^N\times S^{N\times d-1})$. In \cite{JeJu} it was proven that if $v_n\to v$ strictly in $BV(\Omega,\mathbb{R}^N)$, then $\mu[v_n]\overset{*}{\rightharpoonup} \mu[v]$ and $|\mu[v_n](\Omega\times\mathbb{R}^N)|\to|\mu[v](\Omega\times\mathbb{R}^N)|$. The idea now is to apply the classical Reshetnyak continuity theorem (see for instance \cite{Res,Sp}) with an appropriate $f$ and a strictly converging sequence. To this end we transform the set $\Z$ so that averages of the jump functions $u^{\pm}$ encode the values of the traces and such that the convergence assumptions yield strict convergence. Recall that $q=\#\Z$. We define the mapping $T:\Z\to\mathbb{R}^{q}$ via $T(z_i)=e_i$. Next we construct the function $f$. Given $i<j$ we consider the set 
\begin{equation*}
L_{ij}=\{\lambda T(z_i)+(1-\lambda)T(z_j):\;\lambda\in (1/4,3/4)\}.
\end{equation*}
Observe that by construction of the set $T(\Z)$ it holds $L_{ij}\cap L_{kl}=\emptyset$ whenever $\{i,j\}\neq\{k,l\}$. Given $\delta>0$ we next choose a cut-off function $\theta^{\delta}_{ij}: [T(z_i),T(z_j)]\to [0,1]$ such that $\theta^{\delta}_{ij}=1$ on $L_{ij}$ and $\theta^{\delta}_{ij}(x)=0$ if ${\rm dist}(x,L_{ij})\geq\delta$. Set $f_{\delta}\in C(D\times\mathbb{R}^{q}\times S^{q\times d-1})$ as any continuous nonnegative extension of the function
\begin{equation*}
f_{\delta}(x,u,\xi)=
\frac{\theta_{ij}^{\delta}(u)}{\sqrt{2}\mathcal{H}^1(L_{ij})}g(x,z_i,z_j,\frac{\xi^Te_1}{|\xi^Te_1|})|\xi^Te_1|\quad\text{ if }u\in [T(z_i),T(z_j)].
\end{equation*}
First observe that this well-defined due to the ordering $i<j$ (also in the case $\xi^Te_1=0$ as $g$ is bounded). Moreover, for $\delta$ small enough such an extension exists by the properties of the cut-off function. Now for any $T(u)\in BV(\Omega,T(\Z))$, with a suitable orientation of the normal vector, for $|DT(u)|$-almost every $x\in \Omega$ it holds that
\begin{equation*}
\begin{split}
&\frac{\mathrm{d}DT(u)}{\mathrm{d}|DT(u)|}(x)=\frac{1}{\sqrt{2}}\sum_{i<j}(T(z_i)-T(z_j))\otimes\nu_u(x)\mathds{1}_{\F E_i\cap\F E_j}(x),
\\
&|DT(u)|=\sqrt{2}\sum_{i<j}\mathcal{H}^{d-1}\LL{(\F E_i\cap\F E_j)},
\end{split}
\end{equation*} 
where $E_i=\{u=z_i\}$. Therefore we can rewrite with a nonnegative error $\mathcal{O}(\delta)$
\begin{align*}
\int_{\Omega}\int_{0}^1&f_{\delta}(x,T(u)^{\theta},\frac{\mathrm{d}DT(u)}{\mathrm{d}|DT(u)|}(x))\,\mathrm{d}\theta\,\mathrm{d}|DT(u)|(x)
\\
&=\int_{\Omega}\sum_{i<j}g(x,z_i,z_j,\nu_u)\,\mathrm{d}\mathcal{H}^{d-1}\LL{(\F E_i\cap\F E_j)}+\mathcal{O}(\delta)\mathcal{H}^{d-1}(S_u\cap \Omega)
=F(u)+\mathcal{O}(\delta)\mathcal{H}^{d-1}(S_u\cap \Omega).
\end{align*}
If $u_n,u$ are as in the claim, then $T(u_n)\to T(u)$ in $L^1(\Omega)$ and moreover $|DT(u_n)|=\sqrt{2}\mathcal{H}^{d-1}(S_{u_n}\cap \Omega)\to\sqrt{2}\mathcal{H}^{d-1}(S_u\cap \Omega)=|DT(u)|$, so that $T(u_n)$ converges strictly to $T(u)$. Hence we conclude from (\ref{toreshetnyak}) and the classical Reshetnyak continuity theorem applied to the measures $\mu[T(u_n)],\mu[T(u)]$ that
\begin{align*}
\limsup_nF(u_n)&\leq\lim_n \int_{\Omega}\int_{0}^1f_{\delta}(x,T(u_n)^{\theta},\frac{\mathrm{d}DT(u_n)}{\mathrm{d}|DT(u_n)|}(x))\,\mathrm{d}\theta\,\mathrm{d}|DT(u_n)|(x)
\\
&=\int_{\Omega}\int_{0}^1f_{\delta}(x,T(u)^{\theta},\frac{\mathrm{d}DT(u)}{\mathrm{d}|DT(u)|}(x))\,\mathrm{d}\theta\,\mathrm{d}|DT(u)|(x)
\leq F(u)+\mathcal{O}(\delta)\mathcal{H}^{d-1}(S_u\cap \Omega).
\end{align*}
The claim follows by the arbitrariness of $\delta$.
\end{proof}
  
\begin{remark}\label{consequences}
Taking $g(x,u^+,u^-,\nu)=|u^+-u^-|$, Theorem \ref{uppercont} yields that $L^1(\Omega)$-convergence combined with the convergence of $\mathcal{H}^{d-1}(S_u)$ implies strict convergence in $BV(\Omega,\mathcal{Z})$. The converse is false in general as can be seen already by the following one-dimensional example. Given $n\in\mathbb{N}$ we set $u_n:(-1,1)\to\R$ as $u_n(x)=\mathds{1}_{(1/n,2/n)}+2\mathds{1}_{(2/n,1)}$. Then $u_n$ converges strictly to the function $u=2\mathds{1}_{(0,1)}$ while $\mathcal{H}^0(S_{u_n})=2$ but $\mathcal{H}^0(S_u)=1$.
\end{remark}


\end{document}